
\documentclass[a4paper,oneside,12pt]{amsart}

\setlength{\textwidth}{6.17in} \setlength{\textheight}{9.5in}

\voffset = -50pt

\usepackage{graphicx}

\reversemarginpar






\setlength{\textwidth}{6.17in} \setlength{\textheight}{9.5in}

\voffset = -50pt





%
%


\theoremstyle{plain}

\newtheorem{thm}{Theorem}[section]
\newtheorem*{thmA}{Theorem A}

\newtheorem{prop}[thm]{Proposition}
\theoremstyle{definition}

\newtheorem*{Question}{\sf Question}

\numberwithin{equation}{section}
\def\cB{{\mathcal B}}
\def\cD{{\mathcal D}}
\def\cO{{\mathcal O}}
\def\bR{\mathbb{R}}
\def\cDA{{\cD(A)}}

\def\bC{{\mathbb C}}
\def\bD{{\mathbb D}}
\def\pt{\partial}
\newcommand\id{\operatorname{id}}
\newcommand\defin {\overset {\text {\rm def} }{=}}

\def\eps{\varepsilon}
\def\om{\omega}
\def\Re{\operatorname{Re}}

\def\Hol{\operatorname{Hol}}

  \newtheorem*{main thm}{Main Theorem}
 \newtheorem{proposition}[thm]{Proposition}

\def\beq{\begin{eqnarray}}
\def\eeq{\end{eqnarray}}
\def\beqa{\begin{eqnarray*}}
\def\eeqa{\end{eqnarray*}}




\def\pt{\partial}
\def\eps{\epsilon}

\def\beqn{\begin{equation}}
\def\eeqn{\end{equation}}

\def\cH{{\mathcal H}}

\def\cL{{\mathcal L}}

\def\mg#1{}

\renewcommand{\epsilon}{\varepsilon}
\renewcommand{\phi}{\varphi}

\begin{document}
\title[On generators of $C_0$ semigroups]{On generators of $C_0$-semigroups\\ of composition operators}
\author{Eva A. Gallardo-Guti\'errez}
\address{E. A. Gallardo-Guti\'errez \newline Departamento de An\'alisis Matem\'atico,\newline
Facultad de Matem\'aticas,
\newline Universidad Complutense de
Madrid, \newline
 Plaza de Ciencias N$^{\underbar{\Tiny o}}$ 3, 28040 Madrid,  Spain
 \newline
and Instituto de Ciencias Matem\'aticas (CSIC-UAM-UC3M-UCM),
\newline Madrid,  Spain } \email{eva.gallardo@mat.ucm.es}
\thanks{First author is partially supported by Plan Nacional  I+D grant no. MTM2016-77710-P, Spain}
\author{Dmitry Yakubovich}
\address{D. V. Yakubovich\newline
Departamento de Matem\'aticas,\newline  Universidad Aut\'onoma de Madrid,\newline
Cantoblanco, 28049 Madrid, Spain\newline
and Instituto de Ciencias Matem\'aticas (CSIC-UAM-UC3M-UCM), \newline Madrid, Spain.}
\email {dmitry.yakubovich@uam.es}
\thanks{Second author is partially supported by Plan Nacional  I+D grant no. MTM2015-66157-C2-1-P,  the ICMAT Severo Ochoa
project SEV-2015-0554 of the Ministry of Economy and
Competitiveness of Spain, and by the European Regional Development
Fund}

\subjclass[2010]{47B35 (primary)}

\keywords{$C_0$-semigroup of composition operators}


\begin{abstract}
Avicou, Chalendar and Partington proved in \cite{AChP1} that an (unbounded) operator
$(Af)=G\cdot f'$ on the classical Hardy space generates a $C_0$ semigroup
of composition operators if and only if
it generates a quasicontractive semigroup.
Here we prove that if such an operator $A$ generates a $C_0$ semigroup, then it is automatically a semigroup
of composition operators, so that the condition of
quasicontractivity of the semigroup in the cited result is not necessary.
Our result applies to a rather general class of Banach spaces
of analytic functions in the unit disc.
\end{abstract}

\maketitle


\section{Introduction}

Let $\cB$ denote a Banach space. We recall that a one parameter family
$\{T_t\}_{t\ge 0}$ of bounded linear operators acting on $\cB$ is called a
semigroup if $T_0=I$ and
$T_tT_s=T_{t+s}$ for all $t,s\ge 0$.
It is called a $C_0$-semigroup if it is strongly continuous, that is,
$
\lim_{t\to 0^+} T_t f=f$ for any $f\in \cB$.

%
%

We recall that, given a $C_0$-semigroup $\{T_t\}_{t\geq0}$, its
generator $A$ is defined by
\[
Af=\lim_{t\to 0^+} \frac {T_t f-f} t
\]
for $f\in \cDA=\{x\in\cB: \lim_{t\to 0^+} \frac {T_t f-f} t \enspace \text{exists}\}$.
It is a closed and densely defined linear operator on $\cB$, and it determines the
semigroup uniquely.
Observe, as a consequence of the uniform boundedness theorem, that if
$\{T_t\}_{t\ge 0}$ is a $C_0$-semigroup on $\cB$, then there
exists $\om\in \bR$ and $M\ge 1$ such that
\begin{equation}\label{eq 1}
\|T_t\|\le M e^{\om t} \qquad \text{for all}\; t\ge 0,
\end{equation}
(see \cite[Chapter II]{EngNag-book} or \cite[Chapter 3]{ABHN}, for
instance). A semigroup satisfying (\ref{eq 1}) with $M=1$ is
called quasicontractive.

%
In 1978, Berkson and Porta \cite{Ber-Por} gave a complete description of the generator $A$
of semigroups of composition operators acting on the classical Hardy space $H^2(\mathbb{D})$
induced by a \textit{holomorphic flow} of analytic self-maps of the unit disc $\mathbb{D}=\{z\in\bC: |z|<1\}$. Recall that
a \textit{holomorphic flow} in the open unit disc $\mathbb{D}$ is,  by definition
(see \cite{Sh}), a continuous family $\{\phi_t\}_{t\ge 0}$ of analytic self-mappings
of $\bD$ that has a semigroup property with respect to composition.
More precisely, a holomorphic flow has to meet the following conditions:

\begin{enumerate}
\item[1)]
$\phi_0(z)=z$, $\forall z\in \bD$;

\item[2)]
$\phi_{t+s}(z)=\phi_{t}\circ \phi_{s}(z)$, $\forall t,s\ge0, \, \forall \in \bD$;

%

\item[3)] For any $s\ge0$ and any $z\in\bD$,
$\lim_{t\to s}\phi_{t}(z)=\phi_{s}(z)$.
\end{enumerate}

The holomorphic flow
$\{\phi_t\}$ gives rise to a semigroup $T_t f= f\circ \phi_t$ of linear operators on $H^2(\mathbb{D})$, which is called a
semigroup of composition operators.
%
%
Berkson and Porta \cite{Ber-Por} noticed that this semigroup
%
%
is always strongly continuous on
the Hardy space $H^2(\mathbb{D})$;  the same statement was proven by Siskakis regarding
other classical spaces of analytic functions, such as the Dirichlet
space $\mathcal{D}$. We refer to the survey \cite{Sis}, at this regards.

A straightforward computation shows that, at least for the case of $H^2(\mathbb{D})$ or of
$\mathcal{D}$, the generator $A$ of a semigroup of
composition operators is of the form $Af = Gf^{\prime}$, where $G$ is an analytic function in $\mathbb{D}$. Indeed,
as Berkson and Porta showed, $G$ is the infinitesimal generator of the holomorphic flow $\{\phi_t\}$, defined by means of the equation
$$
\frac{\partial \phi_t(z)}{\partial t}=G(\phi_t(z)),\qquad \mbox{for } t\in \mathbb{R}_+ \mbox{ and } z\in \mathbb{D}.
$$

Very recently, Avicou, Chalendar and Partington \cite{AChP1} have
provided a complete description of quasicontractive
$C_0$-semigroups of bounded operators acting either on the Hardy
space $H^2(\mathbb{D})$ or the Dirichlet space $\mathcal{D}$,
whose generator $A$ is of the form $Af = Gf^{\prime}$, where $G$
is an analytic function in $\mathbb{D}$. Indeed, their Theorems
3.9 and 4.1 in \cite{AChP1} include, in particular, the following
statement:

\begin{thmA}
Let $\cB$ be either the Hardy space $H^2(\mathbb{D})$ or the Dirichlet space $\cD$ and let
$Af= G\cdot f'$ for $f\in \cDA=\{f\in \cB: G\cdot f'\in \cB\}$. Then
$A$ generates a $C_0$-semigroup of composition operators on $\cB$ if and only if
$A$ generates a quasicontractive $C_0$-semigroup on $\cB$.
\end{thmA}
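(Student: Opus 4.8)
The plan is to route both implications through a single quadratic-form computation, exploiting the fact that each side of the stated equivalence presupposes that $A=G\cdot f'$ already generates a $C_0$-semigroup, and that such a semigroup is uniquely determined by $A$. Since $\cB$ is a Hilbert space in both cases, I would use the Lumer--Phillips criterion in the following form: if $A$ is \emph{known} to generate a $C_0$-semigroup $\{T_t\}$, then $\|T_t\|\le e^{\om t}$ for all $t\ge0$ (quasicontractivity, with $M=1$) holds if and only if $A-\om$ is dissipative, i.e.
\[
\Re\langle G\cdot f',f\rangle_{\cB}\le \om\,\|f\|_{\cB}^2\qquad\text{for all }f\in\cDA .
\]
The range/surjectivity half of Lumer--Phillips is automatic here because $A$ is assumed to be a generator, so quasicontractivity reduces to this one inequality. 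On the other side, by the Berkson--Porta theory $A$ generates a $C_0$-semigroup of composition operators exactly when $G$ is a semicomplete holomorphic vector field, equivalently when $G(z)=(\tau-z)(1-\ovr{\tau}z)P(z)$ with $\tau\in\ovr{\bD}$ and $\Re P\ge0$ on $\bD$; in that case the flow $\{\varphi_t\}$ solving $\pt_t\varphi_t=G(\varphi_t)$ stays inside $\bD$, the composition semigroup $S_tf=f\circ\varphi_t$ is $C_0$ on $\cB$, and its generator is $G\cdot f'$ on the maximal domain $\{f\in\cB:G\cdot f'\in\cB\}=\cDA$. Thus the whole theorem collapses to the single equivalence
\[
\bigl[\,A-\om\text{ is dissipative for some }\om\in\bR\,\bigr]\iff\bigl[\,G\text{ has the Berkson--Porta form above}\,\bigr].
\]

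Granting this equivalence, the forward implication is immediate: if $A$ generates a composition semigroup then $G$ is of Berkson--Porta type, hence $A-\om$ is dissipative for a suitable $\om$, and since $A$ is a generator this dissipativity upgrades to $\|T_t\|\le e^{\om t}$. The reverse implication is equally short: if $A$ generates a quasicontractive semigroup $\{T_t\}$, then $A-\om$ is dissipative, the equivalence forces $G$ into Berkson--Porta form, the flow $\{\varphi_t\}$ therefore exists inside $\bD$ and yields a $C_0$ composition semigroup $S_t$ with generator $G\cdot f'=A$; by uniqueness of the semigroup attached to a generator, $T_t=S_t$, so $\{T_t\}$ consists of composition operators.

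The main work, and the main obstacle, is the quadratic-form equivalence itself. For ``Berkson--Porta $\Rightarrow$ dissipative'' I would compute $\Re\langle G\cdot f',f\rangle$ explicitly, passing to the boundary on $H^2$ (and to the Dirichlet integral on $\cD$), substitute $G=(\tau-z)(1-\ovr{\tau}z)P$, and use that on $\pt\bD$ this factor equals $-z\,|z-\tau|^2$, so that after an integration by parts in $\theta$ the $f'$-term recombines with $\|f\|^2$ while $\Re P\ge0$ supplies the sign needed to bound the form by $\om\|f\|^2$, with $\om$ read off from $P$ (essentially $\Re G'(\tau)$ when $\tau\in\bD$). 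For the converse ``dissipative $\Rightarrow$ Berkson--Porta'' I would argue by contraposition: if $G$ is not semicomplete its flow points strictly outward at some boundary point, and testing the inequality on reproducing kernels, or on functions sharply concentrated near that point, makes $\Re\langle G\cdot f',f\rangle/\|f\|^2$ unbounded above, destroying dissipativity for every $\om$. The delicate points I expect are (i) justifying the boundary and Dirichlet-integral manipulations for general $f\in\cDA$, where $G$ may be unbounded near $\pt\bD$ (precisely when $\tau\in\pt\bD$ and $|z-\tau|^2$ competes with a blow-up of $P$), together with the control of the resulting error terms, and (ii) the identification of the generator's domain of the composition semigroup with $\cDA$, which is exactly what lets uniqueness close the reverse implication. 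Both are the technical issues that the more general setting of this paper is designed to address.
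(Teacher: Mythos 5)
This paper does not actually prove Theorem~A: it quotes it from Avicou, Chalendar and Partington \cite{AChP1} (their Theorems 3.9 and 4.1), so the only meaningful comparison is with that cited argument, whose broad strategy --- Lumer--Phillips dissipativity combined with the Berkson--Porta representation of semicomplete vector fields --- your proposal does follow. Two of your reductions are sound: given that $A$ is already a generator, quasicontractivity $\|T_t\|\le e^{\om t}$ is indeed equivalent to dissipativity of $A-\om$ (the range condition is free, and uniqueness identifies $e^{-\om t}T_t$ with the contraction semigroup generated by $A-\om$); and once the composition semigroup is known to be $C_0$ with generator $G\cdot f'$ on the maximal domain, uniqueness does close the reverse implication.

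The genuine gap is the crux equivalence itself, above all the direction ``$A-\om$ dissipative $\Rightarrow$ $G$ of Berkson--Porta form,'' which you only gesture at. Concretely: (i) your contrapositive premise, that a non-semicomplete $G$ ``points strictly outward at some boundary point,'' is not a correct description of failure of semicompleteness --- $G$ need not extend to $\pt\bD$ at all, and non-semicompleteness means some trajectory reaches $\pt\bD$ in finite time, which is not localized at a boundary value of $G$. The workable route is direct: test dissipativity on reproducing kernels to get $\Re\bigl(\ovr{w}\,G(w)\bigr)\le \om\,(1-|w|^2)$ for all $w\in\bD$, and then derive the representation $G=(\tau-z)(1-\ovr{\tau}z)P$ from that inequality; but this derivation is precisely the substance of the cited theorems and is absent from your sketch. (ii) Even the kernel test is unavailable as stated: $k_w\in\cDA$ requires $G\cdot k_w'\in\cB$, which for $w\ne 0$ forces $G\in\cB$, whereas Theorem~A is asserted (as the present paper stresses) without the hypothesis $G\in\cB$; one must first extract regularity of $G$ from quasicontractivity or test on a genuinely dense subclass of $\cDA$. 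Finally, your forward implication is done the hard way: once $G$ is semicomplete, quasicontractivity on $H^2$ and $\cD$ follows directly from the composition-operator norm bound $\|C_\phi\|\le\bigl(\tfrac{1+|\phi(0)|}{1-|\phi(0)|}\bigr)^{a}$ together with $|\phi_t(0)|=O(t)$ --- exactly the mechanism of Proposition~\ref{proposition univ}(ii) in Section~\ref{Section 3} --- which bypasses the boundary integration by parts for general $f\in\cDA$ that you flag as delicate but do not carry out.
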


In fact, this assertion was stated in \cite{AChP1} under an additional assumption
$G\in \cB$, however, as the authors observe in \cite[p. 549]{AChP2}, the same proofs
work without this assumption. We refer to \cite{AChP1,AChP2} for more details, in particular,
to several characterizations of possible functions $G$ that may appear here.

The question whether there may exist an operator $Af= G\cdot f'$, which is a generator
of a $C_0$-semigroup (on $H^2(\bD)$ or on $\cD$), which does not consist of composition operators, remained open.

Our main result in this paper gives an answer to this question not only in the context of the Hardy space or the Dirichlet space, but also
for more general function spaces. As a particular instance of our main theorem, we will show that an
operator $Af= G\cdot f'$,  is a generator
of a $C_0$-semigroup (on $H^2(\bD)$ or on $\cD$) if and only if $A$ generates
on these spaces a $C_0$-semigroup of composition operators.

The paper is organized as follows. In
Section~\ref{Section 2}, after some preliminaries, we prove our
main result. Section~\ref{Section 3} contains some discussions,
and we also raise some questions there.

\medskip

\textbf{Remark.}
After this work had been submitted for publication,
W.~Arendt and I.~Chalendar informed us about their work in progress,
where they address the same question and obtain another versions
of our main result, which applies to a class of domains
in $\bC$. Their conditions on the functional
space are different from ours.

\section{$C_0$-semigroups on spaces of analytic functions}
\label{Section 2}

In what follows, $\cB$ will be a Banach space of holomorphic functions on the unit disc $\mathbb{D}$.
The space of bounded linear operators on $\cB$ will be denoted by $\cL(\cB)$.
We denote by $\Hol(\bD)$ the space of all holomorphic functions on $\bD$ and by
$\mathcal{O}(\overline \bD)$ the set of all functions, holomorphic
on (a neighborhood of) the closed unit disc $\overline \bD$. Both are
given a structure of linear topological vector spaces in a usual way. We impose the following
natural assumption on $\cB$:

\medskip
\begin{enumerate}

\item[$(\star)$] $\cO(\overline \bD)\hookrightarrow{} \cB \hookrightarrow{} \Hol(\bD)$, and both embeddings are continuous.

\end{enumerate}

\medskip

%
We remark that, if $\{\phi_t\}$ is a holomorphic flow on $\bD$ and the linear operators $$T_t f = f\circ
\phi_t$$ are bounded on $\cB$ for $t\ge 0$, then they form a semigroup of linear operators.
In this case, we will say that it is a semigroup
of composition operators. In fact, it is a $C_0$-semigroup whenever
$\cB$ is reflexive (see Section \ref{Section 3} below).

Under the hypothesis $(\star)$, the generator of $\{T_t\}_{t\ge0}$ has the form
\[
Af= G\cdot f',
\]
and one has $\cDA=\{f\in \cB: G\cdot f'\in \cB\}$, (see \cite[Theorem 2]{BlContrDMMPapa}, for instance).

\medskip

Our main theorem reads as follows:

\medskip

\begin{main thm} \label{thm-main}
Let $\cB$ be a Banach space of analytic functions on $\bD$
satisfying hypothesis $(\star)$.
%
%
Let $G$ be an analytic function in $\bD$ and let $A$ be given
$Af= G\cdot f'$ for $f\in \cDA=\{f\in \cB: G\cdot f'\in \cB\}$.
Then $A$ generates a $C_0$-semigroup on $\cB$ if and only if $A$ generates a $C_0$-semigroup
of composition operators.
\end{main thm}

\medskip

In particular, by Theorem~A, for the cases of the Hardy space
$H^2(\bD)$ and the Dirichlet space $\cD$, this semigroup will be
necessarily quasicontractive. (In Section~\ref{Section 3}, we will
give more general statements.)

\begin{proof}[Proof of Main Theorem]
Assume $A$ generates a $C_0$-semigroup $\{T_t\}$ on $\cB$. Our goal is to show
the existence of a holomorphic flow $\{\phi_t\}_{t\ge 0}$ such that
$T_t f=f\circ \phi_t$ for all functions $f\in \cB$. Fix a radius $r\in (0,1)$ and
consider the Cauchy problem
$$
\eqno{(CP)}\qquad\qquad
\begin{cases}
\frac {\pt \phi_t(z)}{\pt t}=G(\phi_t(z)) \\
\phi_0(z)=z\qquad\qquad\qquad \big(z\in D(0,r)=\{z\in \bD: \; |z|<r\}\big).
\end{cases}
$$
The standard theory of ordinary differential equations in complex domain implies that
there exists $t_0>0$ and an analytic solution $\{\phi_t(z)\}$ of (CP), defined for
$z\in D(0,r)$ and
all complex $t$, $|t|<t_0$. See, for instance,~\cite{Hille-book}, Theorems~2.2.1 and~2.8.2
(the Cauchy-Kovalevskaya Theorem is also applicable here). Moreover, this solution is unique in the class of
smooth functions.

We will only need real times $t\in (-t_0,t_0)$. Since the differential equation in (CP) is autonomous, we
have the semigroup property:
$\phi_{t+s}(z)=\phi_{t}\circ\phi_{s}(z)$ whenever
$t,s,t+s\in(-t_0,t_0)$ and $z,\phi_{s}(z)\in D(0,r)$.
Indeed, by (CP), for fixed $s$ and $z$, both functions
$\phi_{t+s}(z)$ and $\phi_{t}(\phi_{s}(z))$
satisfy the same differential equation for $t\in (-t_0+|s|,t_0-|s|)$ and have the
same initial value at $t=0$.

Let us also assume that
there is some $r'\in (0,r)$ such that
$\phi_{t}(z)\in D(0,r)$ whenever
$t\in (-t_0,t_0)$ and $z\in D(0,r')$; this is achieved
by substituting $t_0$ with a smaller number. Therefore
\beqn
\label{phit}
\phi_{-t}\circ\phi_{t}(z)=z\qquad \text{if $t\in (-t_0,t_0)$ and $z\in D(0,r')$.}
\eeqn

In what follows, we denote $g'(z)=\frac {\pt g(z)}{\pt z}$.

Our first goal is to prove the following:

\

\noindent \underline{Claim 1:}
For any $f\in\cDA$,
\beqn
\label{star}
\qquad
T_t f(z)= f\circ \phi_t(z),
    \qquad z\in D(0,r), \, 0\le t< t_0.
\eeqn

%
%


\begin{proof}
Fix $f\in \cDA$ and denote
%
%
\[
f _t(z)=T_t f(z), \quad z\in\bD, \; t\ge0.
\]
Then $f _t\in \cDA$ for all $t\ge 0$, and \beqn
\label{1}  
\frac {\pt f _t(z)}{\pt t} = Af _t(z) = G(z)f _t'(z), \quad
z\in\bD, \; t\ge0. \eeqn By (CP), $\frac {\pt \phi_{-t}(z)}{\pt
t}=-G(\phi_{-t}(z))$. Calculating the derivative of
$f_t\circ\phi_{-t}(z)$ with respect to $t$, using \eqref{1} and
the chain rule we get
\[
\frac \pt {\pt t} \big(f _t\circ \phi_{-t}(z)\big)  =
       G(\phi_{-t}(z)) f '_t (\phi_{-t}(z))
                  - f '_t (\phi_{-t}(z)) G(\phi_{-t}(z))=0, \quad 0\le t<t_0.
\]
Therefore for any $z\in D(0,r)$ and any $t\in (0,t_0)$, $f _t\circ
\phi_{-t}(z)=f_0\circ \phi_0(z)=f(z)$. By~\eqref{phit},
$f_t(z)=f\circ \phi_t(z)$ for $z\in D(0, r')$, which by
analyticity of both sides on $D(0, r)$ implies Claim~1.
\end{proof}

Let us denote by $E_z$ the evaluation functional $E_zf \defin
f(z)$, which is continuous on $\cB$ for any $z\in \bD$ by
hypotheses. We can write down \eqref{star} as $$E_z(T_t f) =
E_{\phi(z)}f.$$ Since $\cDA$ is dense in $\cB$, a density argument
gives that \eqref{star} holds for any $f\in \cB$. Now, by applying
\eqref{star} to the identity function $\id$, $\id(z)\equiv z$,
we get $(T_t\id)(z)=\id\circ \phi_t (z)=\phi_t (z)$
(first for $|z|<r$ and then for all $z\in\bD$, by
analytic continuation). This implies that $\phi_t\in \cB$ for any
$t$, $0\le t< t_0$. In the same way, \eqref{star} also gives $(T_t
z^n)(a)=\phi_t (a)^n$ for all $a\in\bD$.

\medskip

\noindent \underline{Claim 2:} There is a positive $t_1 \le t_0$ such that
$|\phi_t(z)|< 1$ for all $z\in\bD$ and all $0\le t< t_1$.

\begin{proof}
Notice that $(\star)$ implies that
$\overline{\lim}_{n\to\infty}\big(\|z^n\|_\cB\big)^{1/n}\le 1$.
Fix some $\eps>0$. Then there exist some constants $C_\eps$ and
$N$ such that for any $t\in[0,t_0)$, $a\in \bD$ and any integer
$n\ge N$,
\[
|\phi_t(a)|^n=|T_t(z^n)(a)|\le \|E_a\|\|T_t\|\|z^n\|_\cB\le C_\eps \|E_a\|\|T_t\| (1+\eps)^n.
\]
Taking $n$th roots, letting $n\to\infty$ and then $\eps\to 0$, we get that $|\phi_t(a)|\le 1$ for any $a\in\bD$.

Since $\phi_t(0)$ depends continuously on $t$ and $\phi_0(0)=0$, there is some $t_1\in(0,t_0]$
such that $|\phi_t(0)|< 1$ for
$0\le t< t_1$. It follows that $|\phi_t(z)|<1$ for  $0\le t< t_1$ and all $z\in\bD$, which completes
the proof.
\end{proof}

\medskip

It was shown above that the functions $\phi_t=T_t\id$, $0\le
t<t_1$, satisfy $\phi_s\circ\phi_t(z)=\phi_{s+t}(z)$ for $s, t\ge
0$, $s+t<t_1$ and $z\in D(0,r)$. Obviously, this equality extends
to all $z\in\bD$. The function $\phi_t(z)=E_z (T_t\id)$ is
continuous in $t\in [0,t_1)$. By \cite{Sh}, Proposition 3.3.1, the
family $\{\phi_t(z)\}$ can be continued to a holomorphic flow,
defined on $[0,+\infty)\times \bD$.

Finally, given any $t>0$, fix some $N$
such that $t/N<t_1$. Then, for any $f\in \cB$ it follows
\[
T_t f= T_{t/N}^N f= f\circ \underbrace{\phi_{t/N}\circ\cdots \circ \phi_{t/N}}_{\text{$N$ times}}
= f\circ \phi_t,
\]
which concludes the proof of the Main Theorem.
\end{proof}

In a recent paper \cite{ChalPart2016},
Chalendar and Partington prove some
analogues of the results of \cite{AChP1}, \cite{AChP2}
for generators given by higher order differential expressions
in the disc. The corresponding operator semigroups in general do not have
such clear geometric interpretation as above.

\section{On quasicontractive composition semigroups:\\ Remarks and Questions}
\label{Section 3}

As it was mentioned before, if the semigroup $T_t f = f\circ
\phi_t,$ $t\ge 0$, is bounded on a \emph{reflexive} Banach space
$\cB$ satisfying $(\star)$, that is, $\cO(\overline
\bD)\hookrightarrow{} \cB \hookrightarrow{} \Hol(\bD)$ and both
embeddings are continuous, then $\{T_t\}$ is automatically a $C_0$
semigroup. To prove it, one just notices that the functionals
\[
E_z f=f(z), \qquad z\in \mathbb{D}
\]
are complete in $\cB$. Hence condition $(\star)$ implies that the
family $\{T_t\}$ is weakly continuous. By \cite[theorem
5.8]{EngNag-book}, it is strongly continuous.

\medskip

%

In general, if $\cB$ is not reflexive, not all bounded semigroups
of composition operators on $\cB$ are $C_0$ semigroups. Moreover,
for some spaces $\cB$, there is no nontrivial $C_0$ semigroups of
composition operators on $\cB$. See \cite{AJS} for spaces between
$H^\infty$ and the Bloch space, \cite{AreContrRodr16} for certain
mixed norm spaces; in these papers one can find references to
earlier results. For non-reflexive spaces, it would be desirable
to find an analogue of Theorem~\ref{thm-main}, where the words
``$C_0$ semigroup'' are substituted by a weaker property, valid
for all bounded composition semigroups.

\medskip

On the other hand, as Avicou, Chalendar and Partington prove in
\cite{AChP1}, any semigroup of composition operators on
$H^2(\mathbb{D})$ is quasicontractive. Their argument extends to a
wide range of Banach spaces $\cB$. Consider the following condition

\medskip
\begin{enumerate}
\item[$ (\star\star)$] For any univalent function $\eta$, which
maps $\bD$ to $\bD$ and satisfies $\eta(0)=0$, one has
$\|f\circ\eta\|_{\cB}\le\|f\|_{\cB}$.
\end{enumerate}

\medskip
Note that, in particular, $ (\star\star)$ implies that $\mathcal{B}$ is rotation invariant.
Moreover, if we denote
\[
\alpha_{r}(z)= \frac {z+r}{1+r z}\,  ,
\quad r\in (0,1),
\]
it is clear that  $\alpha_{r}$ is a
hyperbolic disc automorphism and the arguments of \cite{AChP1} yield
the following statement.

\begin{proposition}
\label{proposition univ}
Suppose $\cB$ has the properties $(\star)$ and $(\star\star)$.
Then the following holds.

\begin{itemize}
\item[(i)] Every holomorphic flow $\{\phi_t\}$ generates a bounded
semigroup of composition operators on $\cB$ (not necessarily a $C_0$-semigroup), if and only if the
composition operators $C_{\alpha_r}f = f\circ\alpha_r$ are bounded on $\cB$
for any $r\in (0,1)$;

\item[(ii)] Every holomorphic flow $\{\phi_t\}$ generates a
quasicontractive semigroup of composition operators on $\cB$ if
and only if the composition operators $C_{\alpha_r}$ are bounded on $\cB$
for $r\in (0,1)$ and satisfy an estimate
\begin{equation}
\label{Cr-quasic}
\|C_{\alpha_r}\|_{\cL(\cB)}\le \left (
\frac{1+r}{1-r} \right )^{a},
\quad r\in (0,1),
\end{equation}
for some nonnegative constant $a$.  In this case, for any univalent function $\phi:\bD\to \bD$
one has
\begin{equation}
\label{est-univ}
\|C_{\phi}\|_{\cB\to \cB}\le
\left (
\frac{1+|\phi(0)|}{1-|\phi(0)|} \right )^{a}\, .
\end{equation}
\end{itemize}
\end{proposition}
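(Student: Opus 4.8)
The plan is to reduce everything to the single one-parameter family of hyperbolic automorphisms $\{\alpha_r\}$ by means of a factorization of univalent composition operators, following the scheme of \cite{AChP1}. The central observation is a factorization: for any univalent $\phi\colon\bD\to\bD$ with $w=\phi(0)$, one writes $\phi=\psi_w\circ\eta$, where $\psi_w(z)=\frac{w-z}{1-\bar w z}$ is the involutive automorphism interchanging $0$ and $w$, and $\eta=\psi_w\circ\phi$ is a univalent self-map fixing the origin. Since $C_\phi=C_\eta C_{\psi_w}$, hypothesis $(\star\star)$ forces $\|C_\eta\|\le1$, whence $\|C_\phi\|\le\|C_{\psi_w}\|$.

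Next I would identify $\|C_{\psi_w}\|$ with $\|C_{\alpha_{|w|}}\|$. First, $(\star\star)$ makes every rotation $R_\theta(z)=e^{i\theta}z$ and its inverse a contraction, so $C_{R_\theta}$ is a surjective isometry. Writing $|w|=\rho$, the automorphisms $\psi_w$ and $\alpha_\rho$ both send $0$ to a point at distance $\rho$, so they differ only by rotations on either side, $\psi_w=R_{\theta_1}\circ\alpha_\rho\circ R_{\theta_2}$; conjugating by the isometries $C_{R_{\theta_j}}$ gives $\|C_{\psi_w}\|=\|C_{\alpha_\rho}\|$. Combining, $\|C_\phi\|\le\|C_{\alpha_{|\phi(0)|}}\|$ for every univalent $\phi$. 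This single estimate already yields the ``$\Leftarrow$'' implications of both (i) and (ii): if the $C_{\alpha_r}$ are bounded then each $C_{\phi_t}$ is bounded, and if they satisfy \eqref{Cr-quasic} then $\|C_\phi\|\le\bigl(\tfrac{1+|\phi(0)|}{1-|\phi(0)|}\bigr)^a$, which is precisely \eqref{est-univ}.

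For the ``$\Rightarrow$'' implications I would exhibit one distinguished flow built from the $\alpha_r$ themselves. Conjugating by the Cayley transform $c(z)=\frac{1+z}{1-z}$ turns $\alpha_r$ into the dilation $w\mapsto\frac{1+r}{1-r}w$ of the right half-plane, so the family $\psi_t:=\alpha_{r(t)}$ with $r(t)=\tanh(t/2)$ (equivalently $\frac{1+r(t)}{1-r(t)}=e^{t}$) is a genuine holomorphic flow, and as $t$ runs over $[0,\infty)$ the parameter $r(t)$ runs over $[0,1)$. Hence, if every flow generates a bounded (resp.\ quasicontractive) composition semigroup, then in particular this one does, which forces each $C_{\alpha_r}$ to be bounded (resp.\ to satisfy $\|C_{\alpha_{r(t)}}\|\le e^{\omega t}=\bigl(\tfrac{1+r(t)}{1-r(t)}\bigr)^{\omega}$, i.e.\ \eqref{Cr-quasic} with $a=\omega$; note $\|C_\phi\|\ge1$ always since $C_\phi$ fixes constants, so $\omega\ge0$).

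It remains to derive quasicontractivity of an \emph{arbitrary} flow from \eqref{Cr-quasic}, and this is where I expect the only real subtlety. Each $\phi_t$ is univalent (a classical property of flow maps), so by the estimate just proved and the identity $\bigl(\tfrac{1+|w|}{1-|w|}\bigr)^a=e^{2a\rho_\bD(0,w)}$, where $\rho_\bD(0,w)=\tfrac12\log\frac{1+|w|}{1-|w|}$ is the hyperbolic distance, one gets $\|C_{\phi_t}\|\le e^{2a\,d(t)}$ with $d(t):=\rho_\bD(0,\phi_t(0))$. The function $d$ is subadditive: by the semigroup property, the triangle inequality, and the Schwarz--Pick contractivity of $\phi_t$,
\[
d(t+s)=\rho_\bD\bigl(0,\phi_t(\phi_s(0))\bigr)\le\rho_\bD(0,\phi_t(0))+\rho_\bD\bigl(\phi_t(0),\phi_t(\phi_s(0))\bigr)\le d(t)+d(s).
\]
The delicate point is to pass from subadditivity to a clean linear bound \emph{with multiplicative constant one}: since $t\mapsto\phi_t(0)$ is differentiable at $t=0$ (its derivative is $G(0)$), $d(t)=O(t)$ near $0$, so $d(t)/t$ is bounded on a right neighborhood of $0$; combined with subadditivity this makes $\beta:=\sup_{t>0}d(t)/t$ finite, and then $d(t)\le\beta t$ gives $\|C_{\phi_t}\|\le e^{2a\beta t}$, a genuinely quasicontractive bound with $M=1$. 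Assembling the two implications in each item completes the proof.
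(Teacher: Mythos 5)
Your proof is correct and follows essentially the route the paper intends: the paper does not write out a proof but attributes the argument to \cite{AChP1}, and your factorization $\phi=\psi_{\phi(0)}\circ\eta$ through rotations and $\alpha_{|\phi(0)|}$ via $(\star\star)$, together with the reparametrization $r=\tanh(t/2)$ and the subadditivity of $t\mapsto\rho_\bD(0,\phi_t(0))$, is exactly that argument. The only point worth flagging is that your $d(t)=O(t)$ step silently uses the Berkson--Porta differentiability of $t\mapsto\phi_t(0)$ at $t=0$, which is standard and consistent with the references the paper relies on.
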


A few words are in order. First, observe that
$\{C_{\alpha_r}\}_{0\leq r<1}$ is a semigroup of operators, if one
makes the change of variables $\displaystyle t=\frac{1}{2}\log
\frac{1+r}{1-r},$ or, equivalently, $r=\tanh t$. Indeed, we have
that $T_t=C_{\alpha_{\tanh t}}$, satisfies $T_t T_s = T_{t+s}$
since $\alpha_r \alpha_s = \alpha_{(r+s)/(1+rs)}$. Inequality
\eqref{Cr-quasic} rewrites as $\|T_t\|_{\cL(\cB)}\le e^{2at}$,
$t\ge 0$, therefore it is equivalent to the fact that $\{T_t\}$ is
a quasicontractive semigroup. By passing to the parameter $t$, it
follows also that if \eqref{Cr-quasic} holds for $r\in(0, r_0)$,
where $0<r_0<1$, then it holds for all $r\in(0, 1)$, and
\eqref{est-univ} is true for any value of $\phi(0)$.

Given a sequence $\beta = \{\beta_n\}$ sequence of positive
numbers, consider the weighted Hardy space $\mathcal{H}^2(\beta)$
consisting of analytic functions $f(z) = \sum_{n=0}^\infty a_n z^n
$ on $\mathbb{D}$ for which the norm
\[
\|f \|_\beta = \bigg( \sum_{n=0}^\infty |a_n|^2  \beta_n^2 \bigg)^{1/2}
\]
is finite.
%
%
%
Consider the quantity
\begin{equation}
\label{sup}
\Lambda:=  \sup\,\bigg \{\Re
\sum_{n=0}^\infty \Big[ (n+1)\beta_n^2 \overline a_n a_{n+1} -n
\beta_{n+1}^2
 a_n \overline a_{n+1}
\Big]
: \, \sum_{n=0}^\infty \beta_n^2 |a_n|^2  = 1 \bigg\}   \;    .
\end{equation}
Gallardo-Guti\'errez and Partington proved in \cite{GallPart} that if
$\cB=\mathcal{H}^2(\beta)$ is a weighted Hardy space which
contains $\mathcal{H}^{2}(\mathbb{D})$, then
$\{C_{\alpha_r}\}$ satisfy \eqref{Cr-quasic} if and only if
$\Lambda<\infty$. Moreover, as they show, the
best constant $a$ in the estimate
\eqref{Cr-quasic} equals to $\Lambda/2$.
(In \cite{GallPart}, Proposition 2.4 was only stated for the case when
$\mathcal{H}^2(\beta)\supset H^2$, but its proof is valid without this assumption.)
Our next observation makes their criterion for quasicontractivity
of $\{C_{\alpha_{\tanh t}}\}$ more explicit.

\begin{proposition}
\label{proposition quasicontractive}
Let $\mathcal{H}^2(\beta)$ be a weighted Hardy space.
%
%
Then $\{C_{\alpha_{\tanh t}}\}_{t\ge 0}$
is a quasicontractive  $C_0$-semigroup if and only if
\begin{equation}
\label{critsup}
\sup_n\;  n\Big|1-\frac {\beta_{n+1}}{\beta_n} \Big| <\infty.
\end{equation}
\end{proposition}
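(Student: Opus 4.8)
The plan is to reduce the statement, via the criteria already recorded in this section, to an elementary comparison of two scalar sequences. By the result of Gallardo-Guti\'errez and Partington quoted above, the operators $C_{\alpha_r}$ satisfy \eqref{Cr-quasic} for some $a\ge 0$ if and only if $\Lambda<\infty$; and by the reparametrization $r=\tanh t$ discussed after Proposition~\ref{proposition univ}, \eqref{Cr-quasic} holds for some $a\ge0$ exactly when $\{C_{\alpha_{\tanh t}}\}_{t\ge0}$ is a quasicontractive semigroup. Since $\mathcal{H}^2(\beta)$ is a Hilbert space, hence reflexive, once these composition operators are bounded (as they are under quasicontractivity) the semigroup is automatically strongly continuous, as in the opening remark of Section~\ref{Section 3} (alternatively, by the uniform bound $\|T_t\|\le e^{2at}$ together with continuity of $t\mapsto T_t p$ on the dense set of polynomials). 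Thus ``quasicontractive $C_0$-semigroup'' is equivalent to $\Lambda<\infty$, and it remains only to show that $\Lambda<\infty$ if and only if \eqref{critsup} holds.

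To analyze $\Lambda$, I would first remove the weights by the substitution $b_n=\beta_n a_n$, under which the normalization becomes $\sum_n|b_n|^2=1$. A direct computation turns the bracket in \eqref{sup} into $c_n\,\Re(\overline b_n b_{n+1})$, where
\[
c_n=(n+1)\,\frac{\beta_n}{\beta_{n+1}}-n\,\frac{\beta_{n+1}}{\beta_n},
\]
using that the coefficients are real and that $\Re(\overline b_n b_{n+1})=\Re(b_n\overline b_{n+1})$. Hence
\[
\Lambda=\sup\Big\{\sum_n c_n\,\Re(\overline b_n b_{n+1}):\ \sum_n|b_n|^2=1\Big\},
\]
which is the supremum of the real part of the numerical range of a weighted shift with weights $c_n$; it is finite if and only if $\sup_n|c_n|<\infty$. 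One direction is the estimate $|\sum_n c_n\Re(\overline b_nb_{n+1})|\le(\sup_n|c_n|)\sum_n|b_n||b_{n+1}|\le\sup_n|c_n|$; the other follows by testing on unit vectors supported on a single pair $\{m,m+1\}$ with the phase chosen so that $c_m\Re(\overline b_m b_{m+1})=|c_m|/2$, forcing $\Lambda\ge\sup_n|c_n|/2$.

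The heart of the argument is then the equivalence $\sup_n|c_n|<\infty\Longleftrightarrow\eqref{critsup}$, and the key is the exact algebraic identity
\[
c_n-1=(1-\rho_n)\Big(n+\frac{n+1}{\rho_n}\Big),\qquad \rho_n:=\frac{\beta_{n+1}}{\beta_n}>0,
\]
verified by expanding the right-hand side. Since the second factor is positive and at least $n$, it gives at once $n\,|1-\rho_n|\le|c_n-1|\le 1+\sup_n|c_n|$, so boundedness of $\{c_n\}$ yields \eqref{critsup}. Conversely, \eqref{critsup} forces $|1-\rho_n|\le K/n$, whence $\rho_n\to1$; thus $\rho_n\ge 1/2$ eventually, the second factor is $\le 3n+2$, and $|c_n-1|=|1-\rho_n|\,(n+(n+1)/\rho_n)\le 5K$ for large $n$, so $\{c_n\}$ is bounded. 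I expect the main obstacle to be precisely this last step: extracting boundedness of $n|1-\rho_n|$ from boundedness of $c_n$ \emph{without} assuming a priori that $\rho_n\to1$. The naive linearization $c_n\approx1-(2n+1)(\rho_n-1)$ is only heuristic, since it presupposes $\rho_n$ close to $1$; the factorization above is what resolves the matter cleanly. Combining the three equivalences proves the proposition.
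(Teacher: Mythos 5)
Your proof is correct and follows essentially the same route as the paper: reduce quasicontractivity to $\Lambda<\infty$ via the Gallardo-Guti\'errez--Partington criterion, substitute $x_n=\beta_n a_n$ so that the summand in \eqref{sup} becomes $c_n\,\Re(x_n\overline{x_{n+1}})$ with $c_n=(n+1)\beta_n/\beta_{n+1}-n\beta_{n+1}/\beta_n$, and compare $\sup_n|c_n|$ with $\Lambda$ by Cauchy--Schwarz and two-point test vectors. The only real difference is that you make explicit, through the identity $c_n-1=(1-\rho_n)\bigl(n+(n+1)/\rho_n\bigr)$, the equivalence between $\sup_n|c_n|<\infty$ and \eqref{critsup}, which the paper dispatches as ``simple calculations.''
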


\begin{proof}[Proof]
Simple calculations show that \eqref{critsup} is equivalent to
\begin{equation}\label{condition quasicontractive}
\sup \Bigl |(n+1)\frac{\beta_n}{\beta_{n+1}} - n \frac{\beta_{n+1}}{\beta_n} \Bigr | <\infty.
\end{equation}

Assume that $\{C_{\alpha_{\tanh t}}\}$ is quasicontractive;
hence  $\Lambda<\infty$, or equivalently
\begin{equation}\label{condition bounded 2}
\sup \Big \{ \Re \sum_{n=0}^\infty
\Big[
(n+1)\frac{\beta_n}{\beta_{n+1}} \,\overline x_n x_{n+1}
-n \frac{\beta_{n+1}}{\beta_n} \,x_n \overline x_{n+1}
\Big]
: \, \sum_{n=0}^\infty |x_n|^2  = 1\Big\}<\infty.
\end{equation}
From here (\ref{condition quasicontractive}) follows for particular choices
of the $\ell^2$-vectors $x=\{x_n\}$ in the unit sphere.

Conversely; let us assume
that (\ref{condition quasicontractive}) holds. Hence (\ref{condition bounded 2}) also holds since clearly
$$
\Re \left (
(n+1)\frac{\beta_n}{\beta_{n+1}} \overline x_n x_{n+1}
-n \frac{\beta_{n+1}}{\beta_n} x_n \overline x_{n+1} \right ) = \left ( (n+1)\frac{\beta_n}{\beta_{n+1}}
-n \frac{\beta_{n+1}}{\beta_n} \right ) \Re (x_n \overline x_{n+1}).
$$
So, $\Lambda$ is finite and therefore,
$$
\|C_{\alpha_r}\|_{\cL(\mathcal{H}^2(\beta))}= \|C_{\alpha_{\tanh t}}\|_{\cL(\mathcal{H}^2(\beta))}
\leq e^{\Lambda t},
$$
which shows that $\{C_{\alpha_r}\}_{0\leq r<1}$ is quasicontractive as we wish.
\end{proof}

The classical Dirichlet space $\cD$ corresponds to the weights,
given by $\beta_n=\root\of n$ for $n\ge 1$ and $\beta_0=1$. This
space satisfies $(\star\star)$ and, in fact, satisfies the
estimate \eqref{est-univ}. (See \cite{Sis-Dir}, \cite{GallMontes}
and \cite{MartVukotic} for estimates for the norms of composition
operators $C_\phi$ on $\cD$.) We get the following statement,
which is close to \cite{GallPart}, Corollary~2.5.

\begin{prop}
Let $\cB=\mathcal{H}^2(\beta)$. Suppose that the sequence $\{\beta_n/\root\of n: n\ge 1\}$
is monotone decreasing, $\beta_0\ge \beta_1$,
and \eqref{critsup} holds. Then any holomorphic flow $\{\phi_t\}$ generates a
quasicontractive semigroup of composition operators on $\cB$
and \eqref{est-univ} holds for any univalent function $\phi:\bD\to\bD$.
\end{prop}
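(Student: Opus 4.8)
The plan is to deduce both conclusions from Proposition~\ref{proposition univ}(ii): once we know that $\cB=\mathcal{H}^2(\beta)$ satisfies $(\star)$ and $(\star\star)$ and that the operators $C_{\alpha_r}$ obey the quasicontractive bound \eqref{Cr-quasic}, that proposition yields at once that every holomorphic flow induces a quasicontractive semigroup of composition operators and that \eqref{est-univ} holds for every univalent self-map $\phi$ of $\bD$. So the task reduces to three verifications, of which the first two are routine and the third is the crux.

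First I would check $(\star)$. Condition \eqref{critsup} gives $|\log(\beta_{n+1}/\beta_n)|\le C/n$ for large $n$, whence $|\log\beta_n|=O(\log n)$ and therefore $\beta_n^{1/n}\to1$, with two-sided polynomial bounds $c\,n^{-K}\le\beta_n\le C\,n^{K}$. The upper bound makes the inclusion $\cO(\overline\bD)\hookrightarrow\cB$ continuous, since the coefficients of $g\in\cO(\overline\bD)$ decay geometrically, while the lower bound bounds the evaluation functionals $E_z$ locally uniformly on $\bD$, giving the continuous inclusion $\cB\hookrightarrow\Hol(\bD)$. Next, \eqref{Cr-quasic} is immediate from the earlier results: by Proposition~\ref{proposition quasicontractive}, condition \eqref{critsup} makes $\{C_{\alpha_{\tanh t}}\}_{t\ge0}$ a quasicontractive $C_0$-semigroup, and quasicontractivity of this semigroup is exactly the estimate \eqref{Cr-quasic} (with $a=\Lambda/2$).

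The main work is establishing $(\star\star)$, i.e.\ $\|f\circ\eta\|_\beta\le\|f\|_\beta$ for every univalent $\eta\colon\bD\to\bD$ with $\eta(0)=0$. The strategy I would use is to represent the norm by an area integral with a \emph{radially decreasing} weight and then exploit the Schwarz lemma. Writing $d_n=\beta_n^2/n$, the hypothesis is precisely that $\{d_n\}_{n\ge1}$ is decreasing, and $\beta_0\ge\beta_1$ handles the constant term. The aim is to produce a decreasing radial weight $P$ with
\[
\|f\|_\beta^2=\beta_0^2\,|f(0)|^2+\frac1\pi\int_\bD|f'(w)|^2\,P(|w|)\,dA(w),
\]
equivalently $d_n=\int_0^1 r^{2n}\,d\rho(r)$ for a positive measure $\rho$, with $P(s)=\rho(\{r>s\})$. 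Granting such a representation, for univalent $\eta$ the substitution $u=\eta(w)$ turns the area term for $f\circ\eta$ into $\frac1\pi\int_{\eta(\bD)}|f'(u)|^2\,P(|\eta^{-1}(u)|)\,dA(u)$; since $\eta(0)=0$ the Schwarz lemma gives $|\eta(w)|\le|w|$, hence $|\eta^{-1}(u)|\ge|u|$ and, $P$ being decreasing, $P(|\eta^{-1}(u)|)\le P(|u|)$. Combined with $\eta(\bD)\subset\bD$ this bounds the term by $\frac1\pi\int_\bD|f'|^2\,P(|u|)\,dA$, while $\eta(0)=0$ preserves the constant term; together these give $\|f\circ\eta\|_\beta\le\|f\|_\beta$. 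The same computation, applied on each disc $\{|w|<r\}$, shows more transparently that $\frac1\pi\int_{\eta(r\bD)}|f'|^2\,dA\le\frac1\pi\int_{r\bD}|f'|^2\,dA$, because $\eta(r\bD)\subset r\bD$; that is, the dilated Dirichlet energies $\sum_{n\ge1}n|a_n|^2r^{2n}$ are contractive for every $r$.

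I expect the representation step to be the main obstacle. The clean correspondence above — decreasing radial weight $\leftrightarrow$ $d_n=\int_0^1 r^{2n}\,d\rho(r)$ — realizes exactly the \emph{completely monotone} sequences $\{d_n\}$, which already covers the basic cases ($H^2$ with $P(|w|)=\log(1/|w|)$ and $d_n=1/n$, and $\cD$ with $P\equiv\mathrm{const}$ and $d_n\equiv\mathrm{const}$). Handling a merely decreasing $\{d_n\}$ requires more care: one cannot in general write the norm as a positive superposition $\int_0^1\big(\sum_{n\ge1}n|a_n|^2r^{2n}\big)d\mu(r)$ of the contractive energies, so the delicate point is either to exploit the additional monotonicity that the weights $\{\beta_n/\sqrt n\}$ supply or to argue contractivity directly from the per-radius estimates, using the special structure of the coefficient sequences arising from univalent composition rather than an abstract domination of partial Dirichlet sums (which can fail). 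This is the step I would need to pin down carefully; the remaining assembly via Proposition~\ref{proposition univ}(ii) is then immediate.
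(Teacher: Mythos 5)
Your overall architecture coincides with the paper's: verify $(\star)$, $(\star\star)$ and the bound \eqref{Cr-quasic}, then invoke Proposition~\ref{proposition univ}(ii). The verifications of $(\star)$ (polynomial bounds on $\beta_n$ from \eqref{critsup}) and of \eqref{Cr-quasic} (via Proposition~\ref{proposition quasicontractive}) are fine. But the crux of the proposition is precisely property $(\star\star)$ for $\mathcal{H}^2(\beta)$, and this is the step you leave open --- as you acknowledge yourself. Your area-integral strategy proves $(\star\star)$ only when $d_n=\beta_n^2/n$ is a Hausdorff moment sequence $\int_0^1 r^{2n}\,d\rho(r)$, which a merely decreasing positive sequence need not be; so, as written, the argument does not establish the statement.

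The missing ingredient is a transference argument based on lower triangularity. The paper takes $(\star\star)$ for the Dirichlet space $\cD$ from Siskakis \cite{Sis-Dir} and then applies Cowen's Hadamard-multiplication theorem \cite[Theorem~7]{Cowen90} (see also Katznelson \cite{Katzn}): since the ratios of the weights of $\cB$ to those of $\cD$, namely $\beta_0$, $\beta_1$, $\beta_2/\sqrt2,\dots$, form a decreasing sequence (this is where $\beta_0\ge\beta_1$ and the monotonicity of $\{\beta_n/\sqrt n\}$ enter), one gets $\|C_\eta\|_{\cL(\cB)}\le\|C_\eta\|_{\cL(\cD)}=1$ for every univalent $\eta:\bD\to\bD$ with $\eta(0)=0$, i.e.\ $(\star\star)$ for $\cB$. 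This same observation rescues the Abel-summation route you were contemplating: because $\eta(0)=0$, the matrix of $C_\eta$ in the basis $\{z^n\}$ is lower triangular, so $P_nC_\eta=P_nC_\eta P_n$ for the projection $P_n$ onto $\operatorname{Span}\{1,z,\dots,z^n\}$; applying the contractivity of $C_\eta$ on $\cD$ to $P_nf$ and using $b_0=f(\eta(0))=a_0$ yields domination of every partial Dirichlet sum, $\sum_{k\le n}k|b_k|^2\le\sum_{k\le n}k|a_k|^2$, and summation by parts against the decreasing sequence $d_k$ then gives $\|f\circ\eta\|_\beta\le\|f\|_\beta$. So the ``abstract domination of partial Dirichlet sums'' that you feared might fail does in fact hold here, but only because of the triangular structure of composition operators fixing the origin; without that observation (or the citation to Cowen/Katznelson) the proof is incomplete.
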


\begin{proof}
Assuming the hypotheses, we get from \cite{Sis-Dir} that
$(\star\star)$ holds for the Dirichlet space.
Next, we apply a result by Cowen \cite[Theorem~7]{Cowen90}, and get
$\|C_\eta\|_{\cL(\cB)}\le \|C_\eta\|_{\cL(\cD)}= 1$
for any univalent function $\eta:\bD\to\bD$ with $\eta(0)=0$. Hence
$(\star\star)$ holds for $\cB$. Now all our statements follow from
Proposition~\ref{proposition univ}.
\end{proof}

We remark that a lemma which implies the cited result by Cowen had
been proved in 1972 by Katznel'son \cite{Katzn} (the proofs are
different). We refer to \cite{ChalPart2014} for more information
and for other applications of this kind of results.

Notice that, whenever $\cB$ is contained in the disc algebra
$A(\bD)$, not all holomorphic flows $\{\phi_t\}$ induce a bounded
semigroup of composition operators on a Banach space $\cB$ This
applies, in particular, to Dirichlet spaces, smaller than $\cD$.
This follows from the observation that $\phi_t\in \cB$ for all
$t>0$ whenever operators $C_{\phi_t}$ are bounded, but there are
flows such that $\phi_t(z)$ does not extend continuously to the
closed unit disc. See also Theorem 4.8 in \cite{CMc}.

There are spaces $\cH^2(\beta)$ where
$C_{\phi}$ is bounded for any univalent $\phi:\bD\to\bD$ but is
unbounded for some non-univalent functions, like
the Dirichlet space (see \cite{FKMR}, for instance).
%

On the other side, composition operators induced by the M\"obius
maps $\alpha_r$ are not always bounded even in spaces
$\mathcal{H}^2(\beta)$ with fast decreasing weights. See, for
instance, Chapter 5 in \cite{CMc}, in particular, Theorem~5.2.


A key observation regarding Proposition \ref{proposition
quasicontractive} is that the quasicontractivity property of
composition semigroups is very sensitive to changing the norm by
an equivalent one. As it follows from Proposition~\ref{proposition
quasicontractive}, if for some sequence of weights $\{\beta_n\}$,
the semigroup $\{C_{\alpha_{\tanh t}}\}$ is quasicontractive, then
it will fail to be quasicontractive for weights
\[
\tilde \beta_n= (2+(-1)^n)\beta_n,
\]
These weights
define an equivalent norm, so that the property of boundedness of
our semigroup (as well as that of
any other composition semigroup) will not be affected by this change.

This phenomenon is related to much more general facts proved by
Matolcsi in  \cite{Matolcsi_Ban}: given any $C_0$-semigroup
on a Banach space, whose generator is unbounded, it can be converted to a non-quasicontractive
one by passing to
an equivalent norm on this space. By~\cite{Matolcsi_Hilb}, the same is true in the context
of Hilbert spaces.

Observe that, if $\cB$ is a Hilbert space, the generator of any
quasicontractive semigroup on $\cB$ admits an $H^\infty$ calculus
on a half-plane $|\arg(z_0-z)|<\pi/2$, see the book \cite{Haase}.
The existence of an $H^\infty$ calculus is not affected if one
passes to an equivalent norm on $\cB$. This motivates the
following question.

\begin{Question}
Do there exist weights $\{\beta_n\}$ such that the generator of
the semigroup $\{C_{\alpha_{\tanh t}}\}$ on $\cH^2(\beta)$ is
bounded, but does not admit an $H^\infty$ calculus in a sector
$|\arg(z_0-z)|<\theta$, where $\theta\in(0,\pi/2)$? The same can
be asked for an arbitrary bounded composition semigroup on
$\cH^2(\beta)$.
\end{Question}

Finally, we notice that the property $(\star\star)$ and estimates
like~\eqref{est-univ} are known for many classical Banach spaces.
We refer to \cite[Chapter 3]{CMc} for $H^p$ spaces and to
\cite{BouCimaMath} for VMOA. Property $(\star\star)$ also holds
true for the case of mixed norm spaces $H(p,q,\alpha)$,
see~\cite{AreContrRodr16}.

\end{document}